\title{Algebraic Equations in State Condition}
\author{Cheolgyu Lee}
\address[CL]{Center for Geometry and Physics, Institute for Basic Science (IBS), Pohang 37673, Republic of Korea}
\address[CL]{Department of Mathematics, POSTECH,
77 Cheongam-ro, Nam-gu, Pohang, Gyeongbuk, 37673, Korea.
}
\email{ghost279.math@gmail.com}
\thanks{This work was supported by IBS-R003-D1. The author was partially supported by the following grants funded by
the government of Korea: NRF grant 2011-0030044 (SRC-GAIA) and
NRF-2013R1A1A2010649.
\\2010 Mathematics Subject Classification. Primary 14L24, 03D15.
\\It is great pleasure to thank Donghoon Hyeon, who introduced the author to the original statement and encouraged him. The author also wants to thank Junyoung Park, who pointed out that the original statement is not true.
}
\date{\today}
\newtheorem{theorem}{Theorem}[section]
\newtheorem{lemma}[theorem]{Lemma}
\newtheorem{corollary}[theorem]{Corollary}
\newenvironment{definition}[1][Definition]{\begin{trivlist}
\item[\hskip \labelsep {\bfseries #1}]}{\end{trivlist}}
\begin{document}
\begin{abstract}
In this paper, we will prove that a problem deciding whether there is an upper-triangular coordinate in which a character is not in the state of a Hilbert point is NP-hard. This problem is related to the GIT-semistability of a Hilbert point.
\end{abstract}
\maketitle
\section{Introduction}
Let $k$ be an algebraically closed field of characteristic zero and $^{r}S=k[x_{1}, \ldots, x_{r}]$ be a polynomial ring of $r$ variables graded by degree. Let's omit the superscript $^{r}$ if there is no confusion. When non-negative integers $d$ and $b$ are fixed, there is a projective space
\begin{displaymath}
E_{d, b}^{r}=\mathbb{P}\bigg(\bigwedge^{b} \textrm{}^{r}S_{d}\bigg),
\end{displaymath}
 which is a $\textup{GL}_{r}(k)$-representation. Let $T_{r}$ be the maximal torus of $\textup{GL}_{r}(k)$ which consists of diagonal matrices and $U_{r}$ be the set of all upper-triangular matrices with $1$'s in the diagonal. There is a $G$-equivariant closed immersion
\begin{displaymath}
i_{r, P, d}:\textup{Hilb}^{P}(\mathbb{P}^{r-1}_{k})\rightarrow E_{d, Q(d)}^{r}
\end{displaymath}
 for $d\geq g_{P}$ where $g_{P}$ is the Gotzmann number associated to a Hilbert polynomial $P$, which is defined in \cite{Gotzmann}. Also $Q(d)=\binom{r+d-1}{d}-P(d)$.

For any point $v\in E_{d, b}^{r}$, the collection of states $\Xi_{G.v}=\{\Xi_{g.v}(T)|g\in\textup{GL}_{r}(k)\}$ (defined in \cite{Kempf}) of $v$ determines whether $v$ is semistable or not, as stated in \cite{Ian}. If $v$ is unstable, $\Xi_{G.v}$ determines the Hesselink strata of $\mathbb{P}\bigg(\bigwedge^{b} S_{d}\bigg)^{\textup{us}}$ that contains $v$, which is stated in \cite{Hesselink}.
For an arbitrary character $\chi$ of $T$, $Z_{v, \chi}=\{g\in \textup{GL}_{r}(k)|\chi\notin \Xi_{g.v}\}$ is a Zariski-closed subset of $\textup{GL}_{r}(k)$. In this paper, we will construct a solvability check problem (\textbf{SC}) which is equivalent to deciding if an arbitrary system of algebraic equations is solvable (\textbf{SysAl}) by \textit{specializing} the defining equation of some $Z_{v, \chi}$ to the defining equation of $U_{r}\cap Z_{v, \chi}$ in $U_{r}$.
 
It's a well-known fact that to decide whether an arbitrary system of algebraic equation is  solvable is an NP-hard problem.(\cite{Mar}) We will show that this problem can be reduced to the problem asking whether there is a $g\in U_{r}$ such that $\chi\notin \Xi_{g.w}$, in polynomial time. This means that such a problem is NP-hard. This problem is related to the GIT-semistability of a Hilbert point. By solving finitely many such problems, we can decide whether a Hilbert point is semistable or not.
\section{Definitions and notations}
First of all, we need to define the notion of generalization of a system of algebraic equation.
\begin{definition}
Suppose $I$ be an ideal of $S=k[x_{1}, \ldots, x_{r}]$. An ideal $J$ of a finitely generated $k$ algebra $R$ is a {\it generalization} of $I$ under $\pi$ if there is a surjective ring homomorphism $\pi:R\rightarrow S$ and a minimal generator $\{z_{1}, \ldots, z_{r'}\}$ of $R$ satisfying follows:
\begin{itemize}
\item For any $1\leq i\leq r'$, $\pi(z_{i})\in k\cup\{x_{1}, \ldots, x_{r}\}$
\item $\pi(J)=I$
\end{itemize}
$I$ is a {\it specialization} of $J$ if $J$ is a generalization of $I$.
\end{definition}

For example, $I=\langle x^{2}+y^{2}\rangle \subset k[x, y]$ is a specialization of $J=\langle z(x^{2}+y^{2}), zw\rangle\subset k[x, y, z, w]$ under the map $\pi:k[x, y, z, w]\rightarrow k[x, y]$ which satisfying $\pi(x)=x$, $\pi(y)=y$, $\pi(z)=1$ and $\pi(w)=0$.

We define some notation. Let $<_{\textup{lex}}$ be a lexicographic monomial order satisfying $x_{i+1}<_{\textup{lex}}x_{i}$ and let $A_{d, b}^{r}=\bigwedge^{b} \textrm{}^{r}S_{d}$.
Let $^{r}M_{d}$ be the set of all monomials in $^{r}S_{d}$ and
\begin{displaymath}
W_{d, b}^{r}=\bigg\lbrace\bigwedge_{i=1}^{b} m_{i}\bigg\vert m_{i}\in\textrm{ }^{r}M_{d},\textrm{ } m_{i}>_{\textup{lex}} m_{i+1}\bigg\rbrace.
\end{displaymath}
$W_{d, b}^{r}$ is a basis of $A_{d, b}^{r}$. Suppose $v\in A_{d, b}^{r}$ and $w\in W_{d, b}^{r}$. We define $v_{w}$ to be the $w$-component of the vector $v$. That is,
\begin{displaymath}
v=\sum_{w\in W_{d, b}^{r}} v_{w}w.
\end{displaymath}
Let $[v]\in E_{d, b}^{r}$ be the line in $A_{d, b}^{r}$ through $v$ and the origin of $A_{d, b}^{r}$. For any $g\in \textup{GL}_{r}(k)$, $g_{ij}\in k$ is the component of $g$ in the $i$'th row and $j$'th column. That is,
\begin{displaymath}
g=\left[\begin{array}{cccc}
g_{11} & \cdots & g_{1j} & \cdots \\
\vdots & \ddots & \vdots & \cdots \\
g_{i1} & \cdots & g_{ij} & \cdots \\
\vdots & \vdots & \vdots & \ddots
\end{array}\right]
\end{displaymath}
Also, $\textup{GL}_{r}(k)$ action on $^{r}S$ is given by $g.x_{i}=\sum_{1\leq j\leq r} g_{ji}x_{j}$. Note that this action is a left action on $^{r}S$.
For any $v\in A_{d, b}^{r}$ and $w\in k[W_{d, b}^{r}]$, $(g.v)_{w}$ means $((\textup{id}_{\Gamma(\textup{GL}_{r}(k), \mathcal{O}_{\textup{GL}_{r}(k)})}\otimes_{k}e_{v})\circ\phi)(w)$ when $g$ is an indeterminate. Here $\phi$ is the co-action map
\begin{displaymath}
\phi : k[W_{d, b}^{r}]\rightarrow \Gamma(\textup{GL}_{r}(k), \mathcal{O}_{\textup{GL}_{r}(k)}) \otimes_{k} k[W_{d, b}^{r}]\cong k[\{g_{ij}\}_{i, j=1}^{r}]_{\det g}\otimes_{k} k[W_{d, b}^{r}]
\end{displaymath}
and $e_{v}$ is the evaluation map $e_{v}:k[W_{d, b}^{r}]\rightarrow k$ at $v$.
Let's define $\chi_{i}\in X(T_{r})$ for all $1\leq i\leq r$ as follows:
\begin{displaymath}
\chi_{i}(D)=D_{ii}
\end{displaymath}
where $D\in T_{r}$. Let $\xi_{d, b}^{r}=\frac{db}{r}(1, \ldots, 1)\in X(T_{r})_{\mathbb{R}}=X(T_{r})\otimes_{\mathbb{Z}}\mathbb{R}$. Here $X(T_{r})$ is the group of characters of the algebraic torus $T_{r}$.

Let $L_{r}=\{g\in\textup{GL}_{r}(k)|g\textrm{ is lower-triangular.}\}$. Let's define a specialization map $\theta_{r}:\Gamma(\textup{GL}_{r}(k), \mathcal{O}_{\textup{GL}_{r}(k)})\rightarrow \Gamma(U_{r}, \mathcal{O}_{U_{r}})$ as follows.
\begin{displaymath}
\theta_{r}(z_{ij})=\left\{
\begin{array}{ll} 
1 & \textrm{if } i=j\\
0 & \textrm{if } i>j\\
z_{ij} & \textrm{if } i<j
\end{array}\right.
\end{displaymath}
where $\Gamma(\textup{GL}_{r}(k), \mathcal{O}_{\textup{GL}_{r}(k)})=k[\{z_{ij}\}_{i, j=1}^{r}]_{\textup{det}z}$ and $\Gamma(U_{r}, \mathcal{O}_{U_{r}})=k[\{z_{ij}\}_{1\leq i< j\leq r}]$. For any $C\subset k[\{z_{ij}\}_{i, j=1}^{r}]_{\textup{det}z}$, let $\textup{span} C$ be the $k$-subspace of $k[\{z_{ij}\}_{i, j=1}^{r}]_{\det z}$ spanned by $C$. Let $\Sigma_{r}$ be the permutation group on the set $\{1, 2, \ldots, r\}$, which is a subgroup of $\textup{GL}_{r}(k)$. Let $\Delta_{v}$ be the convex hull of $\Xi_{v}$ in $X(T_{r})_{\mathbb{R}}$ for all $v\in E_{d, b}^{r}$.
\section{Polynomial coefficients in some special cases}
 Suppose $v\in A_{d, b}^{r}$. In this section, we will compute $v_{w}$ for some special $w\in W_{d, b}^{r}$. Let's compute it when $b=1$ first.
\begin{lemma}
\label{onewedge}
Suppose $r\geq 2$. Let $p\in \textrm{}^{r}S_{d}=A_{d, 1}^{r}$. For any $g\in \textup{GL}_{r}(k)$, 
\begin{displaymath}
(g.p)_{x_{1}^{d-j}x_{2}^{j}}=\sum_{i_{1}+\ldots+i_{r}=j} \frac{\prod_{1\leq a\leq r}g_{2a}^{i_{a}}}{\prod_{1\leq a\leq r} i_{a}!}\frac{\partial^{j} p}{\partial x_{1}^{i_{1}}\ldots \partial x_{r}^{i_{r}}}\bigg\vert_{x_{i}=g_{1i}}
\end{displaymath}
\end{lemma}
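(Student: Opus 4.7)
The plan is a direct Taylor/chain-rule computation, exploiting the fact that the coefficient of $x_1^{d-j}x_2^j$ only sees the first two variables.

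First I would unfold the action. By definition $g.x_i=\sum_{a}g_{ai}x_a$, so $g.p=p(y_1,\dots,y_r)$ where $y_i=\sum_{a=1}^r g_{ai}x_a$. Since the monomial $x_1^{d-j}x_2^j$ involves only $x_1$ and $x_2$, the coefficient of $x_1^{d-j}x_2^j$ in $g.p$ equals the coefficient of $x_1^{d-j}x_2^j$ in the bivariate homogeneous polynomial
\[
h(x_1,x_2)\;:=\;(g.p)\big|_{x_3=\cdots=x_r=0}\;=\;p\bigl(g_{11}x_1+g_{21}x_2,\;\dots,\;g_{1r}x_1+g_{2r}x_2\bigr),
\]
which is homogeneous of degree $d$.

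Next I would extract that coefficient by specializing. Because $h$ is homogeneous of degree $d$,
\[
h(1,t)\;=\;\sum_{j=0}^{d}\bigl([x_1^{d-j}x_2^j]\,h\bigr)\,t^{j},
\]
so $[x_1^{d-j}x_2^j]\,h=\tfrac{1}{j!}\tfrac{\partial^{j}}{\partial t^{j}}h(1,t)\big|_{t=0}=\tfrac{1}{j!}\tfrac{\partial^{j} h}{\partial x_2^{j}}\big|_{(x_1,x_2)=(1,0)}$.

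Then I would compute $\partial_{x_2}^{j}h$ by the multivariable chain rule. With $\partial y_a/\partial x_2=g_{2a}$, iterated differentiation and the multinomial theorem give
\[
\frac{\partial^{j}}{\partial x_2^{j}}\,p(y_1,\dots,y_r)\;=\;\sum_{i_1+\cdots+i_r=j}\binom{j}{i_1,\dots,i_r}\prod_{a=1}^{r}g_{2a}^{i_a}\cdot\frac{\partial^{j} p}{\partial x_1^{i_1}\cdots\partial x_r^{i_r}}(y_1,\dots,y_r).
\]
Dividing by $j!$ turns the multinomial coefficient into $1/\prod_a i_a!$, and evaluating at $(x_1,x_2)=(1,0)$ substitutes $y_a=g_{1a}$, yielding the stated formula.

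I do not expect a real obstacle: the only subtle bookkeeping is the observation that setting $x_3=\cdots=x_r=0$ is legitimate for extracting the coefficient, and keeping track that homogeneity lets us replace the two-variable partial derivative by a one-variable derivative at a nonsingular point $(1,0)$, avoiding a joint vanishing issue at the origin. The combinatorial step is just the standard multinomial chain rule, so the proof reduces to justifying these two reductions carefully.
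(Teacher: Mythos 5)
Your argument is correct, but it is not the route the paper takes. The paper's proof is essentially a one-liner: reduce to the case where $p$ is a monomial by linearity of both sides in $p$, and then verify the identity by directly expanding $g.p=\bigl(\sum_j g_{j1}x_j\bigr)^{i_1}\cdots\bigl(\sum_j g_{jr}x_j\bigr)^{i_r}$ and reading off the $x_1^{d-j}x_2^j$ coefficient. Your approach instead treats $p$ uniformly: specialize $x_3=\cdots=x_r=0$ (legitimate since the target monomial involves only $x_1,x_2$), use homogeneity to recover the coefficient as $\frac{1}{j!}\partial_t^j h(1,t)\big|_{t=0}$, and then apply the multinomial expansion of the directional-derivative operator $\sum_a g_{2a}\partial_{x_a}$ followed by evaluation at $y_a=g_{1a}$. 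What your version buys is a computation-free treatment of general $p$ and a transparent explanation of where the $1/\prod_a i_a!$ and the substitution $x_i=g_{1i}$ come from; the paper's version is shorter but buries the same combinatorics inside the phrase ``expanding $g.p$.'' Both arguments are sound. If you write yours up, it is worth stating explicitly that the multinomial expansion of $\bigl(\sum_a g_{2a}\partial_{x_a}\bigr)^j$ is valid because the operators $\partial_{x_a}$ commute and the $g_{2a}$ are scalars; that is the analogue of the bookkeeping the paper's monomial expansion performs by hand.
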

\begin{proof}
Without loss of generality, we can assume that $p$ is a monomial. When $p$ is a monomial, expanding $g.p$ proves the equality.
\end{proof}

We can generalize Lemma \ref{onewedge} using the following lemma.

\begin{lemma}
\label{twowedge}
Suppose $r\geq 2$. Let $p_{1}, p_{2}\in \textrm{}^{r}S_{d}=A_{d, 1}^{r}$. For any $g\in \textup{GL}_{r}(k)$,
\begin{displaymath}
(g.p_{1}\wedge p_{2})_{x_{1}^{d-j_{1}}x_{2}^{j_{1}}\wedge x_{1}^{d-j_{2}}x_{2}^{j_{2}}}=\bigg\lvert\begin{array}{cc}
(g.p_{1})_{x_{1}^{d-j_{1}}x_{2}^{j_{1}}} & (g.p_{1})_{x_{1}^{d-j_{2}}x_{2}^{j_{2}}}\\
(g.p_{2})_{x_{1}^{d-j_{1}}x_{2}^{j_{1}}} & (g.p_{2})_{x_{1}^{d-j_{2}}x_{2}^{j_{2}}}
\end{array}
\bigg\rvert
\end{displaymath}
for all $1\leq j_{1} < j_{2} \leq d$.
\end{lemma}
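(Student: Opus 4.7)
The plan is to exploit the multiplicativity of the $\textup{GL}_{r}(k)$-action on the exterior power, namely
\[
g.(p_{1}\wedge p_{2})=(g.p_{1})\wedge(g.p_{2}),
\]
and then simply read off the coefficient on the basis vector $x_{1}^{d-j_{1}}x_{2}^{j_{1}}\wedge x_{1}^{d-j_{2}}x_{2}^{j_{2}}\in W_{d,2}^{r}$ in the right-hand side.

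Concretely, I would first expand each factor in the monomial basis ${}^{r}M_{d}=W_{d,1}^{r}$ of $A_{d,1}^{r}$:
\[
g.p_{s}=\sum_{m\in {}^{r}M_{d}}(g.p_{s})_{m}\,m\qquad(s=1,2),
\]
so that
\[
(g.p_{1})\wedge(g.p_{2})=\sum_{m_{1},m_{2}\in {}^{r}M_{d}}(g.p_{1})_{m_{1}}(g.p_{2})_{m_{2}}\,m_{1}\wedge m_{2}.
\]
Then I would regroup this double sum over unordered pairs, using $m\wedge m=0$ and $m_{1}\wedge m_{2}=-m_{2}\wedge m_{1}$, to obtain
\[
(g.p_{1})\wedge(g.p_{2})=\sum_{\substack{m_{1},m_{2}\in {}^{r}M_{d}\\ m_{1}>_{\textup{lex}}m_{2}}}\bigl[(g.p_{1})_{m_{1}}(g.p_{2})_{m_{2}}-(g.p_{1})_{m_{2}}(g.p_{2})_{m_{1}}\bigr]\,m_{1}\wedge m_{2},
\]
which expresses the left-hand side in the basis $W_{d,2}^{r}$. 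Since $1\leq j_{1}<j_{2}\leq d$ forces $x_{1}^{d-j_{1}}x_{2}^{j_{1}}>_{\textup{lex}}x_{1}^{d-j_{2}}x_{2}^{j_{2}}$, the claimed basis vector is already in canonical form, and reading off its coefficient yields precisely the $2\times 2$ determinant in the statement.

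There is no substantive obstacle here: the lemma is essentially the antisymmetry of the wedge product made explicit. The only small checks are that the lex comparison between the two chosen monomials goes the right way, so that no sign correction is needed, and that the many components of $g.p_{s}$ on monomials outside $\{x_{1}^{d-i}x_{2}^{i}\mid 0\leq i\leq d\}$ contribute to other elements of $W_{d,2}^{r}$ and therefore do not affect the particular coefficient being computed. Both points are immediate from the definition of $W_{d,b}^{r}$ and the expansion above.
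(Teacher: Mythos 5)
Your argument is correct and is exactly the derivation the paper compresses into ``It can be derived from the definition'': expand $(g.p_{1})\wedge(g.p_{2})$ in the monomial basis, regroup by antisymmetry, and read off the coefficient, noting that $j_{1}<j_{2}$ gives $x_{1}^{d-j_{1}}x_{2}^{j_{1}}>_{\textup{lex}}x_{1}^{d-j_{2}}x_{2}^{j_{2}}$ so no sign correction is needed. No issues.
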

\begin{proof}
It can be derived from the definition.
\end{proof}

In Lemma~\ref{onewedge}, we see that taking $(g.\star)_{m}$ of $p$ {\it separates} each monomial with respect to the degrees of each variables of $p$ and $m$. Our construction would make use of this {\it phenomenon}. That is, we will {\it control} the degree of one variable, say $x_{r+1}$.

Fix $d$. Let $F$ be a sequence $\{F_{i}\}_{i=0}^{2l-1}\in (\textrm{}^{r}S_{d})^{2l}\subset (\textrm{}^{r+1}S_{d})^{2l}$. Let's define $v_{d}^{r}(F)\in A_{2l+d, 2}^{r+1}$ as follows.
\begin{displaymath}
v_{d}^{r}(F)=\left\{\sum_{i=0}^{2l-1}x_{r+1}^{i}x_{1}^{2l-i}F_{i}\right\}\wedge \left\{\sum_{i=0}^{2l-1}x_{r+1}^{i+1}x_{1}^{2l-i-1}F_{i}\right\}
\end{displaymath}
Note that $[v_{d}^{r}(F)]\in \textup{Hilb}^{P}(\mathbb{P}_{k}^{r})$ where
\begin{displaymath}
P(t)=\binom{r+t}{r}-\binom{r+t-2l-d+1}{r}+\binom{r+t-2l-d-1}{r-2}.
\end{displaymath}
Indeed, the graded ideal
\begin{displaymath}
I_{F}=\bigg\langle \sum_{i=0}^{2l-1}x_{r+1}^{i}x_{1}^{2l-i}F_{i}, \sum_{i=0}^{2l-1}x_{r+1}^{i+1}x_{1}^{2l-i-1}F_{i} \bigg\rangle
\end{displaymath}
of $^{r+1}S$ satisfies the following properties.
\begin{lemma}
\label{hpoint}
$^{r+1}S/I_{F}$ has the Hilbert polynomial
\begin{displaymath}
P(t)=\binom{r+t}{r}-\binom{r+t-2l-d+1}{r}+\binom{r+t-2l-d-1}{r-2}.
\end{displaymath}
Also, $g_{P}=2l+d$ so that $i_{r+1, P, 2l+d}(I_{F})= [v_{d}^{r}(F)]$. If $r\geq 2$ then $I_{F}$ is saturated.
\end{lemma}
\begin{proof}
$I=I_{F}$ is isomorphic to $\langle x_{1}, x_{r+1}\rangle(-2l-d+1)$ as a graded $^{r+1}S$ module. Thus, $\dim_{k} (I_{F})_{t+2l+d}$ is equal to the number of monomials in $^{r+1}S_{t+1}$ which is divisible by $x_{1}$ or $x_{r+1}$, for every $t\geq 0$. This implies that $I_{F}$ has the Hilbert polynomial
\begin{displaymath}
Q(t)=\binom{r+t-2l-d+1}{r}-\binom{r+t-2l-d-1}{r-2}.
\end{displaymath}
$Q$ admits the Macaulay representation
\begin{displaymath}
Q(t)=\binom{r+t-2l-d}{r}+\binom{r+t-2l-d-1}{r-1}.
\end{displaymath}
By the definition of $n(Q)$ in \cite[p. 65]{Gotzmann}, $g_{P}=2l+d$. The regularity of $I_{F}$ is equal to the regularity of $\langle x_{1}, x_{r+1}\rangle(-2l-d+1)$, which is equal to $2l+d$. Let $J$ be the saturation of $I_{F}$. The Hilbert polynomial of $J$ is $Q$. This implies that the regularity of $J$ is at most $g_{P}=2l+d$. Therefore, $\dim_{k} J_{t}=Q(t)=\dim_{k} I_{t}$ for all $t\geq 2l+d$ by \cite[(1.2) Satz, (2.9) Lemma]{Gotzmann}. Suppose $r\geq 2$. If there is a homogeneous $q\in J\setminus I$ then $q\in J_{t}$ for some $t<2l+d$. We derive an inequality $2=\dim_{k} I_{2l+d}=\dim_{k} J_{2l+d}\geq \dim_{k} \langle q\rangle_{2l+d}\geq r+1\geq 3$, which is false.
\end{proof}

We can analyze the polynomial coefficient of $g.v_{d}^{r}(F)$ as follows:
\begin{lemma}
\label{basic}
$\{f_{a, r, l, F}\}_{a=0}^{l-1}$ is a basis for 
\begin{displaymath}
\textup{span}\{\theta_{r+1}((g.v_{d}^{r}(F))_{x_{1}^{2l+d-a}x_{r+1}^{a}\wedge x_{1}^{d+a}x_{r+1}^{2l-a}})|0\leq a\leq l-1\}
\end{displaymath}
where
\begin{displaymath}
f_{a, r, l, F}=\sum_{0\leq i < j \leq 2l-1} \tilde{F_{i}}\tilde{F_{j}}g_{1 r+1}^{i+j-2l+1}\left[\binom{i}{a}\binom{j}{2l-a-1}+\binom{i}{2l-a-1}\binom{j}{a}\right]
\end{displaymath}
\begin{displaymath}
+\sum_{i=0}^{2l-1}\tilde{F_{i}}^{2}g_{1 r+1}^{2i-2l+1}\binom{i}{a}\binom{i}{2l-a-1}
\end{displaymath}
and
\begin{displaymath}
\tilde{F_{i}}=F_{i}(1, g_{12}, \ldots, g_{1 r}).
\end{displaymath}
\end{lemma}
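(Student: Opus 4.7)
The plan is to compute $\theta_{r+1}((g.v_d^r(F))_{w_a})$ for $w_a = x_1^{2l+d-a}x_{r+1}^a \wedge x_1^{d+a}x_{r+1}^{2l-a}$ in closed form, identify it with $f_{a,r,l,F} - f_{a-1,r,l,F}$ (setting $f_{-1,r,l,F}:=0$), and then read off the basis claim from the resulting unitriangular change of variables.

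Write $v_d^r(F)=A\wedge B$ with $A=\sum_{i=0}^{2l-1}x_{r+1}^{i}x_1^{2l-i}F_i$ and $B=\sum_{i=0}^{2l-1}x_{r+1}^{i+1}x_1^{2l-i-1}F_i$. First I would apply the $(x_1,x_{r+1})$-analogue of Lemma \ref{twowedge} (whose proof transfers verbatim from the $(x_1,x_2)$ case) to replace $(g.v_d^r(F))_{w_a}$ by the $2\times 2$ minor
\begin{displaymath}
(g.A)_{u_1}(g.B)_{u_2}-(g.A)_{u_2}(g.B)_{u_1},\qquad u_1=x_1^{2l+d-a}x_{r+1}^{a},\ u_2=x_1^{d+a}x_{r+1}^{2l-a}.
\end{displaymath}
Then I would apply the $(x_1,x_{r+1})$-analogue of Lemma \ref{onewedge} to each of the four entries. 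Since $\theta_{r+1}(g_{(r+1)b})=\delta_{b,r+1}$, the multinomial sum of that lemma collapses: only $i_{r+1}$ survives, and after differentiating $\partial^a/\partial x_{r+1}^a$ and evaluating at $(x_1,\dots,x_{r+1})=(1,g_{12},\dots,g_{1\,r+1})$ the $A$-entry becomes
\begin{displaymath}
\theta_{r+1}((g.A)_{u_1})=\sum_{i\ge a}\binom{i}{a}g_{1\,r+1}^{\,i-a}\tilde{F_i},
\end{displaymath}
with analogous formulas for the other three entries (for $B$ the index shifts to $i+1$, and for $u_2$ the role of $a$ is played by $2l-a$).

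The core combinatorial step is to expand this determinant and collect coefficients. For a pair $0\le i\le j\le 2l-1$, symmetrization yields that the coefficient of $\tilde{F_i}\tilde{F_j}\,g_{1\,r+1}^{\,i+j+1-2l}$ in the expanded minor is
\begin{displaymath}
\binom{i}{a}\binom{j+1}{2l-a}+\binom{j}{a}\binom{i+1}{2l-a}-\binom{i}{2l-a}\binom{j+1}{a}-\binom{j}{2l-a}\binom{i+1}{a}.
\end{displaymath}
Applying Pascal's rule $\binom{k+1}{m}=\binom{k}{m}+\binom{k}{m-1}$ to each $\binom{\cdot+1}{\cdot}$ factor splits this into two symmetric pieces: one is precisely the coefficient of $\tilde{F_i}\tilde{F_j}\,g_{1\,r+1}^{\,i+j+1-2l}$ in $f_{a,r,l,F}$, while the other is the corresponding coefficient in $f_{a-1,r,l,F}$. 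Treating the diagonal $i=j$ case in the same manner yields the clean identity
\begin{displaymath}
\theta_{r+1}((g.v_d^r(F))_{w_a}) = f_{a,r,l,F}-f_{a-1,r,l,F},\qquad 0\le a\le l-1.
\end{displaymath}

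This relation is strictly lower-unitriangular in $a$, so the two families $\{\theta_{r+1}((g.v_d^r(F))_{w_a})\}_{a=0}^{l-1}$ and $\{f_{a,r,l,F}\}_{a=0}^{l-1}$ span the same $k$-subspace of $\Gamma(U_{r+1},\mathcal{O}_{U_{r+1}})$, giving the span part of the claim. For the linear independence part, the convenient pivot is the term $\tilde{F_a}\tilde{F_{2l-a-1}}$: using $\binom{m}{n}=0$ for $m<n$ one checks directly that it occurs with coefficient $1$ (and with exponent $0$ on $g_{1\,r+1}$) in $f_{a,r,l,F}$ and with coefficient $0$ in every $f_{b,r,l,F}$ with $b\in\{0,\dots,l-1\}\setminus\{a\}$. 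The main obstacle is the binomial bookkeeping in the determinant expansion; the cleanest execution is to keep the $i=j$ and $i<j$ strata separate and to apply Pascal's rule exactly once to each $\binom{\cdot+1}{\cdot}$ factor, so that the $f_a$- and $f_{a-1}$-halves drop out of the symmetrization automatically.
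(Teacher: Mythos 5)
Your derivation of the telescoping identity $\theta_{r+1}((g.v_d^r(F))_{w_a}) = f_{a,r,l,F} - f_{a-1,r,l,F}$ (with $f_{-1}:=0$) is exactly the paper's proof; the paper states this relation without exhibiting the Pascal-rule bookkeeping you supply, and your account of that bookkeeping is correct, including the collapse of the multinomial sum under $\theta_{r+1}$ and the cancellation of the symmetric terms.

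The added pivot argument for linear independence is not in the paper, and as written it has a gap. You read off the coefficient of $\tilde F_a\tilde F_{2l-1-a}$ (at $g_{1\,r+1}$-degree $0$) as a pivot, implicitly treating the products $\tilde F_a\tilde F_{2l-1-a}$ for distinct $a$ as distinguishable; but $\tilde F_i$ is the fixed polynomial $F_i(1,g_{12},\dots,g_{1r})\in k[g_{12},\dots,g_{1r}]$, not a formal variable, and those products need not be linearly independent — they can coincide or vanish identically. In the paper's own downstream use $F=F_\psi$ one has $(F_\psi)_i=0$ for $l\le i\le 2l-2$, so $\tilde F_{2l-1-a}=0$ for all $1\le a\le l-1$ and every pivot except $a=0$ degenerates; and for $\psi=0$ one computes $f_a=\binom{2l-1}{a}^2 g_{1\,r+1}^{2l-1}$, so the $f_a$ span a one-dimensional space and the basis claim of Lemma~\ref{basic} fails outright. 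The paper evades the issue by never arguing independence of the $f_a$; in Lemma~\ref{refinement} it instead re-derives the transformed family $\pi_j^\psi$ and asserts their independence for the specific $F_\psi$. So the span half of your proof is right and matches the paper, but the independence half needs an explicit nondegeneracy hypothesis on $F$ (or a case-specific argument) before it establishes the stated basis claim.
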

\begin{proof}
Using Lemma~\ref{onewedge} and Lemma~\ref{twowedge}, we can compute that
\begin{displaymath}
f_{a, r, l, F}-f_{a-1, r, l, F}=\theta_{r+1}((g.v_{d}^{r}(F))_{x_{1}^{2l+d-a}x_{r+1}^{a}\wedge x_{1}^{d+a}x_{r+1}^{2l-a}})
\end{displaymath}
for all $1\leq a\leq l-1$ and
\begin{displaymath}
f_{0, r, l, F}=\theta_{r+1}((g.v_{d}^{r}(F))_{x_{1}^{2l+d}\wedge x_{1}^{d}x_{r+1}^{2l}}).
\end{displaymath}
\end{proof}
Let $\psi$ be a sequence $\{\psi_{i}\}_{i=0}^{l-1}\in (\textrm{}^{r}S_{d})^{l}\subset (\textrm{}^{r+1}S_{d})^{l}$. Let's define a sequence $F_{\psi}\in(\textrm{}^{r}S_{d})^{2l}\subset (\textrm{}^{r+1}S_{d})^{2l}$ as follows:
\begin{itemize}
\item $(F_{\psi})_{i}=0$ for all $l\leq i\leq 2l-2$.
\item $(F_{\psi})_{2l-1}=x_{1}^{d}$.
\item $(F_{\psi})_{i}=\frac{1}{i!}\psi_{i}$ for all $0\leq i\leq l-1$.
\end{itemize}
\begin{lemma}
\label{refinement}
$\pi^{\psi}=\{\pi_{j}^{\psi}\}_{j=0}^{l-1}$ is a basis for
\begin{displaymath}
\textup{span}\{\theta_{r+1}((g.v_{d}^{r}(F_{\psi}))_{x_{1}^{2l+d-a}x_{r+1}^{a}\wedge x_{1}^{d+a}x_{r+1}^{2l-a}})|0\leq a\leq l-1\}
\end{displaymath}
where
\begin{displaymath}
\pi_{j}^{\psi}=\frac{(2l-1)!}{(2l-1-j)!}\left[\sum_{a=0}^{l-1-j}\binom{2l-1-j}{a}(-1)^{a}\right]g_{1 r+1}^{2l-1} + \tilde{\psi_{j}}g_{1 r+1}^{j}
\end{displaymath}
and
\begin{displaymath}
\tilde{\psi_{j}}=\psi_{j}(1, g_{12}, \ldots , g_{1r}).
\end{displaymath}
\end{lemma}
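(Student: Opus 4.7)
The plan is to substitute the sparse sequence $F_\psi$ into the formula for $f_{a,r,l,F}$ from Lemma~\ref{basic}, exploit the massive cancellation this produces, and then exhibit $\{\pi_j^\psi\}$ as an explicit triangular change of basis applied to $\{f_{a,r,l,F_\psi}\}$. The cancellation is driven by two observations: $\widetilde{(F_\psi)_i}$ is supported on $\{0,\ldots,l-1\}\cup\{2l-1\}$, and for $0 \leq a \leq l-1$ the binomial $\binom{i}{2l-a-1}$ vanishes whenever $i \leq l-1$, since then $2l-a-1 \geq l > i$.

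These two facts together kill every cross term $\widetilde{(F_\psi)_i}\widetilde{(F_\psi)_j}$ with $i,j \leq l-1$ and every diagonal term $\widetilde{(F_\psi)_i}^{2}$ with $i \leq l-1$. Only the mixed contributions with $i \leq l-1,\, j = 2l-1$ and the single diagonal $i = j = 2l-1$ survive, and after using $\binom{2l-1}{2l-a-1} = \binom{2l-1}{a}$ the formula collapses to
\begin{displaymath}
f_{a,r,l,F_\psi} = \binom{2l-1}{a}\left[\sum_{i=a}^{l-1}\frac{\tilde{\psi_i}}{i!}\binom{i}{a}\,g_{1\,r+1}^{i} + \binom{2l-1}{a}g_{1\,r+1}^{2l-1}\right].
\end{displaymath}

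Next I would solve for coefficients $c_{j,a}$ realizing $\pi_j^\psi = \sum_a c_{j,a}\left(f_{a,r,l,F_\psi}/\binom{2l-1}{a}\right)$. Matching the coefficients of $\tilde{\psi_i}g_{1\,r+1}^{i}$ for $0 \leq i \leq l-1$ produces the triangular system $\sum_a c_{j,a}\binom{i}{a} = j!\,\delta_{ij}$; inverting the lower-triangular Pascal matrix gives the unique solution $c_{j,a} = j!(-1)^{a-j}\binom{a}{j}$ for $a \geq j$ (and $0$ otherwise). Substituting $b = a - j$ and applying the elementary identity $\binom{a}{j}\binom{2l-1}{a} = \binom{2l-1}{j}\binom{2l-1-j}{a-j}$, the resulting coefficient of $g_{1\,r+1}^{2l-1}$ collapses to $\frac{(2l-1)!}{(2l-1-j)!}\sum_{b=0}^{l-1-j}(-1)^{b}\binom{2l-1-j}{b}$, which is precisely the constant in the stated formula for $\pi_j^\psi$. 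Hence each $\pi_j^\psi$ lies in the span; and since $\pi_j^\psi$ has a leading $g_{1\,r+1}^{j}$ term with coefficient $\tilde{\psi_j}$ and these exponents are pairwise distinct, the $l$ elements are linearly independent over $k$, so a dimension count forces equality of spans.

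The main obstacle will be the careful bookkeeping of which binomial factors vanish in the initial simplification and the sign/factorial management in the Pascal inversion — both steps are combinatorially routine but easy to botch in the details, and the binomial identity $\binom{a}{j}\binom{2l-1}{a} = \binom{2l-1}{j}\binom{2l-1-j}{a-j}$ must be invoked at exactly the right moment for the two constants to match.
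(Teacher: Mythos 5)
Your proof is correct and follows essentially the same route as the paper: both substitute $F_\psi$ into Lemma~\ref{basic}, observe the binomial-vanishing cancellation, and realize $\pi_j^\psi$ via the triangular combination $\sum_{a\ge j}\frac{(-1)^{a+j}a!}{(a-j)!}\binom{2l-1}{a}^{-1}f_{a,r,l,F_\psi}$ (your Pascal-inversion coefficients $c_{j,a}=j!(-1)^{a-j}\binom{a}{j}$ are algebraically identical). The only cosmetic slip is calling $\tilde{\psi_j}g_{1\,r+1}^{j}$ the \emph{leading} term of $\pi_j^\psi$ --- the degree-$2l-1$ term is the leading one, so you mean the lowest-degree term in $g_{1\,r+1}$; the distinctness-of-exponents argument is unaffected.
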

\begin{proof}
By the definition of $f_{a, r, l, F_{\psi}}$,
\begin{displaymath}
a!\binom{2l-1}{a}^{-1}f_{a, r, l, F_{\psi}}=\frac{(2l-1)!}{(2l-1-a)!}g_{1 r+1}^{2l-1}
+\sum_{i=a}^{l-1}\frac{1}{(i-a)!}\tilde{\psi_{i}}g_{1 r+1}^{i}
\end{displaymath}
for all $0 \leq a\leq l-1$. Now
\begin{displaymath}
\sum_{a=j}^{l-1}\frac{(-1)^{a+j}}{(a-j)!}\left[a!\binom{2l-1}{a}^{-1}f_{a, r, l, F_{\psi}}\right]=
(2l-1)!\sum_{a=j}^{l-1}\frac{(-1)^{a+j}}{(a-j)!(2l-1-a)!}g_{1 r+1}^{2l-1}
\end{displaymath}
\begin{displaymath}
+\sum_{a=j}^{l-1}\sum_{i=a}^{l-1}\frac{(-1)^{a+j}}{(a-j)!(i-a)!}\tilde{\psi_{i}}g_{1 r+1}^{i}
\end{displaymath}
\begin{displaymath}
=\frac{(2l-1)!}{(2l-1-j)!}\sum_{a=0}^{l-1-j}\binom{2l-1-j}{a}(-1)^{a}g_{1 r+1}^{2l-1}
\end{displaymath}
\begin{displaymath}
+\sum_{i=j}^{l-1}\sum_{a=0}^{i-j}\frac{1}{(i-j)!}\binom{i-j}{a}(-1)^{a}\tilde{\psi_{i}}g_{1 r+1}^{i}=\pi_{j}^{\psi}.
\end{displaymath}
Clearly $\{\pi_{j}|0\leq j\leq l-1\}$ is a linearly independent set. This proves the lemma.
\end{proof}
$\pi^{\psi}$ has the following property. This property depends on the characteristic of $k$, which is zero in this paper.
\begin{lemma}
\label{nonzero}
The coefficient of $g_{1 r+1}^{2l-1}$ in $\pi^{\psi}_{j}$ is non-zero. That is,
\begin{displaymath}
\sum_{a=0}^{l-1-j}\binom{2l-1-j}{a}(-1)^{a}\neq 0
\end{displaymath}
for any choice of integers $l$ and $j$ satisfying $l\geq 1$ and $0\leq j\leq l-1$.
\end{lemma}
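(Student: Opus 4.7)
The plan is to close-form the alternating sum and then observe that in characteristic zero the resulting integer is manifestly non-zero. Set $n=2l-1-j$ and $m=l-1-j$, so the sum in question reads $S(n,m)=\sum_{a=0}^{m}(-1)^{a}\binom{n}{a}$. Note that $0\le m$ (since $j\le l-1$) and $m<n$ (since $n-m=l\ge 1$), so the partial sum is a genuine truncation of the full alternating binomial sum.

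The key identity I would invoke is the standard partial-sum formula
\begin{displaymath}
\sum_{a=0}^{m}(-1)^{a}\binom{n}{a}=(-1)^{m}\binom{n-1}{m},
\end{displaymath}
which follows immediately from Pascal's rule $\binom{n}{a}=\binom{n-1}{a}+\binom{n-1}{a-1}$: the sum telescopes to $(-1)^{m}\binom{n-1}{m}$ after reindexing the second piece. Substituting back $n=2l-1-j$ and $m=l-1-j$ yields
\begin{displaymath}
\sum_{a=0}^{l-1-j}(-1)^{a}\binom{2l-1-j}{a}=(-1)^{l-1-j}\binom{2l-2-j}{l-1-j}.
\end{displaymath}

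Since $0\le l-1-j\le 2l-2-j$ (the upper inequality being just $l-1\ge 0$), the binomial coefficient on the right is a positive integer. As $k$ has characteristic zero, this positive integer is non-zero in $k$, which is exactly the claim. No step is a genuine obstacle; the only thing to be careful about is the range of validity of the partial-sum identity, and the verification that the truncation index $m=l-1-j$ indeed lies strictly below $n=2l-1-j$ so that the right-hand side is a central-ish binomial coefficient rather than vanishing. The dependence on characteristic zero enters only at the very last step, where we need the positive integer $\binom{2l-2-j}{l-1-j}$ to survive reduction to $k$.
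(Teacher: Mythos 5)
Your proposal is correct, and it takes a genuinely different route from the paper. The paper does not compute the alternating partial sum in closed form; instead it observes that the binomial coefficients $\binom{2l-1-j}{a}$ are non-decreasing on the range $0\leq a\leq l-1-j$ (since this range lies in the increasing half of the row of Pascal's triangle), and then argues by parity: if $l-1-j$ is even it pairs the terms as $1+\sum_{a=1}^{(l-1-j)/2}\left[\binom{2l-1-j}{2a}-\binom{2l-1-j}{2a-1}\right]>0$, and if $l-1-j$ is odd a similar pairing forces the sum to be strictly negative. Your approach instead invokes the telescoping identity $\sum_{a=0}^{m}(-1)^{a}\binom{n}{a}=(-1)^{m}\binom{n-1}{m}$ to get the exact value $(-1)^{l-1-j}\binom{2l-2-j}{l-1-j}$, then checks that $0\leq l-1-j\leq 2l-2-j$ so the binomial coefficient is a positive integer. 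Both arguments are sound and both hinge on characteristic zero only at the final step. The trade-off: the paper's pairing argument is elementary and self-contained (no identity to cite), whereas yours is shorter, avoids the case split on parity, and yields strictly more information, namely the explicit non-zero value of the coefficient, which could be useful downstream. Your sanity checks on the index ranges ($m\geq 0$ and $m<n$) are exactly the right ones to record.
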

\begin{proof}
Note that
\begin{displaymath}
\binom{2l-1-j}{a}\leq \binom{2l-1-j}{a+1}
\end{displaymath}
for all $a$ satisfying $0\leq a \leq l-1-j$.\\
If $l-1-j$ is even,
\begin{displaymath}
\sum_{a=0}^{l-1-j}\binom{2l-1-j}{a}(-1)^{a}=1+\sum_{a=1}^{\frac{l-1-j}{2}}\binom{2l-1-j}{2a}-\binom{2l-1-j}{2a-1}>0.
\end{displaymath}
Similarly, If $l-1-j$ is odd, we can show that
\begin{displaymath}
\sum_{a=0}^{l-1-j}\binom{2l-1-j}{a}(-1)^{a}<0
\end{displaymath}
because the first term is always strictly smaller than the absolute value of the second term.
\end{proof}
\section{NP-hardness of a problem judging the existence of an upper-triangular coordinate}
Suppose $l\geq 3$, $r\geq 2$ and $p=\{p_{i}\}_{i=0}^{l-3}\in k[x_{2}, \ldots , x_{r}]^{l-2}$. Assume that 
\begin{displaymath}
d\geq \max\{\deg(p_{i})|0\leq i\leq l-3\}
\end{displaymath} 
where $\deg(p_{i})$ means the non-homogeneous degree of $p_{i}$. Let's construct $\psi(p)=\{\psi_{i}(p)\}_{i=0}^{l-1}$.
\begin{itemize}
\item Define the first two terms as follows:
\begin{equation}
\label{firstterm}
\psi_{i}(p)=-\frac{(2l-1)!}{(2l-1-i)!}\left[\sum_{a=0}^{l-1-i}\binom{2l-1-i}{a}(-1)^{a}\right]x_{1}^{d}
\end{equation}
 for $i\in\{0, 1\}$.
\item For $2\leq i\leq l-1$, let
\begin{equation}
\label{otherterms}
\psi_{i}(p)=-\frac{(2l-1)!}{(2l-1-i)!}\left[\sum_{a=0}^{l-1-i}\binom{2l-1-i}{a}(-1)^{a}\right]x_{1}^{d}
\end{equation}
\begin{displaymath}
+x_{1}^{d}p_{i-2}\left(\frac{x_{2}}{x_{1}}, \ldots, \frac{x_{r}}{x_{1}} \right).
\end{displaymath}
\end{itemize}
Now we are ready to prove follows.
\begin{theorem}
\label{main1}
Let $l\geq 3$. There is $g\in U_{r+1}$ satisfying $\chi=\chi_{1}^{2d+2l}\chi_{r+1}^{2l}\notin \Xi_{[g.v_{d}^{r}(F_{\psi (p)})]}$ if and only if the ideal $J$ of $k[x_{2}, \ldots, x_{r}]$ generated by $\{p_{i}|0\leq i\leq l-3\}$ has a solution over $k$.
\end{theorem}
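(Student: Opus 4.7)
The strategy is to translate the condition $\chi\notin \Xi_{g.v_{d}^{r}(F_{\psi(p)})}$ into an explicit polynomial system in the entries of $g\in U_{r+1}$ via Lemma~\ref{refinement}, and then verify that this system is solvable over $U_{r+1}$ precisely when the system $\{p_{i}=0\}$ is solvable over $k$.

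First I would identify the basis vectors of $A_{2l+d,2}^{r+1}$ of weight $\chi=\chi_{1}^{2d+2l}\chi_{r+1}^{2l}$. Weight forces both factors of such a wedge to be supported in $\{x_{1},x_{r+1}\}$; the degree condition together with $m_{1}>_{\textup{lex}}m_{2}$ then leaves exactly the $l$ wedges $x_{1}^{2l+d-a}x_{r+1}^{a}\wedge x_{1}^{d+a}x_{r+1}^{2l-a}$ for $0\leq a\leq l-1$. Hence for $g\in U_{r+1}$, the condition $\chi\notin \Xi_{g.v_{d}^{r}(F_{\psi(p)})}$ is equivalent to the simultaneous vanishing of these $l$ coordinates, which by Lemma~\ref{refinement} is equivalent to $\pi_{j}^{\psi(p)}(g)=0$ for every $0\leq j\leq l-1$.

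Next I would substitute (\ref{firstterm}) and (\ref{otherterms}) into the formula for $\pi_{j}^{\psi(p)}$. Since $\widetilde{(\cdot)}$ sets $x_{1}=1$, the factor $x_{1}^{d-\deg p_{j-2}}$ disappears, and the scalar parts of $\psi_{j}(p)$ were chosen to be precisely the negative of the coefficient of $g_{1\,r+1}^{2l-1}$ in $\pi_{j}^{\psi}$. This gives
\[
\pi_{j}^{\psi(p)} = c_{j}\bigl(g_{1\,r+1}^{2l-1}-g_{1\,r+1}^{j}\bigr)\qquad (j=0,1),
\]
\[
\pi_{j}^{\psi(p)} = c_{j}\bigl(g_{1\,r+1}^{2l-1}-g_{1\,r+1}^{j}\bigr)+p_{j-2}(g_{12},\ldots,g_{1r})\,g_{1\,r+1}^{j}\qquad (2\leq j\leq l-1),
\]
where every $c_{j}\neq 0$ by Lemma~\ref{nonzero}.

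The theorem then falls out. For the forward direction, $\pi_{0}^{\psi(p)}(g)=0$ forces $g_{1\,r+1}^{2l-1}=1$, and $\pi_{1}^{\psi(p)}(g)=0$ then forces $g_{1\,r+1}=1$; the remaining equations collapse to $p_{j-2}(g_{12},\ldots,g_{1r})=0$, so $(g_{12},\ldots,g_{1r})$ solves $J$. Conversely, given any solution $(a_{2},\ldots,a_{r})$ of $J$, I would take $g\in U_{r+1}$ with $g_{1j}=a_{j}$ for $2\leq j\leq r$, $g_{1\,r+1}=1$, and the remaining strictly upper-triangular entries arbitrary; each $\pi_{j}^{\psi(p)}$ depends only on $g_{1\,r+1}$ and $g_{12},\ldots,g_{1r}$, so all of them vanish at this $g$. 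The main obstacle is the bookkeeping in the substitution step: one must verify that the scalar prefactors in (\ref{firstterm})--(\ref{otherterms}) exactly match the coefficient of $g_{1\,r+1}^{2l-1}$ in Lemma~\ref{refinement}, a routine but sign- and factorial-sensitive check.
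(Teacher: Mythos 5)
Your proposal is correct and follows essentially the same route as the paper: reduce $\chi\notin\Xi_{g.v_{d}^{r}(F_{\psi(p)})}$ to the vanishing of the $l$ weight-$\chi$ coordinates, replace these by the generators $\pi_{j}^{\psi(p)}$ via Lemma~\ref{refinement}, use $j=0,1$ together with Lemma~\ref{nonzero} to force $g_{1\,r+1}=1$, and observe that the remaining equations become exactly $p_{j-2}=0$. Your explicit enumeration of the weight-$\chi$ basis wedges is a point the paper leaves implicit, but the argument is the same.
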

\begin{proof}
By definition, $Z_{[v_{d}^{r}(F_{\psi (p)})], \chi}\cap U_{r+1}$ is the zero set of the ideal 
\begin{displaymath}
I\subset \Gamma(U_{r+1}, \mathcal{O}_{U_{r+1}})=k[\{g_{ij}\}_{1\leq i< j\leq r+1}]
\end{displaymath}
 generated by
\begin{displaymath}
\{\theta_{r+1}((g.v_{d}^{r}(F_{\psi(p)}))_{x_{1}^{2l+d-a}x_{r+1}^{a}\wedge x_{1}^{d+a}x_{r+1}^{2l-a}})|0\leq a\leq l-1\}.
\end{displaymath}
By Lemma~\ref{refinement}, $I$ is generated by
\begin{displaymath}
\{\pi^{\psi(p)}_{i}|0\leq i\leq l-1\}.
\end{displaymath} 
It suffices to show that the zero set of $I$ is non empty if and only if the zero set of $J$ is non empty. If there is an element $\{x_{ij}\}_{1\leq i<j\leq r+1}$ in the zero set of $I$, then $x_{1 r+1}=1$ because $\pi_{i}^{\psi(p)}=0$ for $i\in\{0, 1\}$ if and only if $g_{1 r+1}^{2l-1}=1$ and $g_{1 r+1}^{2l-1}-g_{1 r+1}=0$ by Lemma~\ref{nonzero}. Note that $(x_{12}, \ldots, x_{1r})$ is a solution of the system of equation defined by
\begin{displaymath}
\{\pi^{\psi(p)}_{i}|_{g_{1 r+1}=1}|2\leq i\leq l-1\}=\{p_{i}(g_{12}, \ldots, g_{1r})|0\leq i\leq l-3\}
\end{displaymath}
so that $J$ has non-empty zero set. If there is an element $\{x_{i}\}_{i=2}^{r}$ in the zero set of $J$, $\{z_{ij}\}_{1\leq i<j\leq r+1}$ is in the zero set of $I$ if $z_{1i}=x_{i}$ for all $2\leq i\leq r$ and $z_{1 r+1}=1$.
\end{proof}
Theorem~\ref{main1} implies follows.
\begin{corollary}
For any ideal $I$ of a polynomial ring, there is a Hilbert point $v\in\textup{Hilb}^{P}(\mathbb{P}_{k}^{r})$, a choice of closed immersion $\textup{Hilb}^{P}(\mathbb{P}_{k}^{r})\rightarrow \mathbb{P}(\bigwedge^{Q(d)}S_{d})$ and a character $\chi\in X(T_{r+1})$ such that there is an ideal $J$ of $\Gamma(\textup{GL}_{r+1}(k), \mathcal{O}_{\textup{GL}_{r+1}(k)})$ such that $Z_{v, \chi}$ is the zero locus of $J$ and $J$ is a generalization of $I$.
\end{corollary}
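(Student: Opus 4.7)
The plan is to extract the required data directly from Theorem~\ref{main1}. Given an arbitrary ideal $I$ of a polynomial ring, I first rename variables so that $I\subseteq k[x_2,\ldots,x_r]$ for some $r\geq 2$, then choose an ordered list of generators $p=\{p_i\}_{i=0}^{l-3}$ of $I$, padding with zeros and enlarging $l$ if necessary so that $l\geq 3$, and set $d=\max_i \deg p_i$. The candidate data is then $v=v_d^r(F_{\psi(p)})\in\textup{Hilb}^P(\mathbb{P}_k^r)$ under the natural closed immersion into $E_{2l+d, Q(2l+d)}^{r+1}$, together with $\chi=\chi_1^{2d+2l}\chi_{r+1}^{2l}$.

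I would next identify the defining ideal $J$ of $Z_{v,\chi}$ in $R=k[\{z_{ij}\}_{i,j=1}^{r+1}]_{\det z}$. By definition of the state $\Xi_{g.v}$, this ideal is generated by the coefficients $(g.v)_w$ as $w$ ranges over the basis elements of $W_{2l+d, 2}^{r+1}$ of weight $\chi$; an exponent count under the lex-order constraint shows these are exactly the $l$ vectors $w_\alpha=x_1^{2l+d-\alpha}x_{r+1}^\alpha\wedge x_1^{d+\alpha}x_{r+1}^{2l-\alpha}$ for $0\leq \alpha\leq l-1$ that already appear in Lemmas~\ref{basic} and~\ref{refinement}. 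I then define the $k$-algebra homomorphism $\pi\colon R\to k[x_2,\ldots,x_r]$ by $\pi(z_{ii})=1$, $\pi(z_{ij})=0$ whenever $i>j$ or $2\leq i<j$, $\pi(z_{1,j})=x_j$ for $2\leq j\leq r$, and $\pi(z_{1,r+1})=1$, extended to the localization via $\pi((\det z)^{-1})=1$. Then $\pi$ is surjective and sends every generator of $R$ into $k\cup\{x_2,\ldots,x_r\}$, so the generator condition in the definition of generalization is met.

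It remains to verify $\pi(J)=I$. The key observation is that $\pi$ factors as $\sigma\circ\theta_{r+1}$, where $\sigma\colon\Gamma(U_{r+1},\mathcal{O}_{U_{r+1}})\to k[x_2,\ldots,x_r]$ further specializes $z_{1,r+1}\mapsto 1$, $z_{1,j}\mapsto x_j$ for $2\leq j\leq r$, and the remaining strictly upper-triangular entries to $0$. Lemma~\ref{refinement} identifies $\theta_{r+1}(J)$ with the ideal generated by $\{\pi_j^{\psi(p)}\}_{j=0}^{l-1}$. Applying $\sigma$ and reading off (\ref{firstterm}) and (\ref{otherterms}) gives $0$ for $j\in\{0,1\}$ (the two terms of $\pi_j^{\psi(p)}$ cancel because $g_{1,r+1}=1$) and $p_{j-2}(x_2,\ldots,x_r)$ for $2\leq j\leq l-1$, exactly mirroring the calculation in the proof of Theorem~\ref{main1}. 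Hence $\pi(J)=\langle p_0,\ldots,p_{l-3}\rangle=I$.

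The main obstacle I expect is purely bookkeeping around the definition of generalization, since $R$ is a localized polynomial ring rather than a polynomial ring. I would handle this by presenting $R\cong k[\{z_{ij}\},t]/(t\det z-1)$, taking $\{z_{ij}\}\cup\{t\}$ as the minimal generating set, and verifying that $\pi$ respects the defining relation (since $\pi(\det z)=1=\pi(t)$) while $\pi(t)=1\in k$ meets the definition's requirement. Beyond this bookkeeping, the corollary is essentially a restatement of Theorem~\ref{main1} in the language of generalizations.
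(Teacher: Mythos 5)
Your proof is correct and fills in precisely the details the paper leaves implicit: the paper offers no written proof of this corollary beyond the remark that Theorem~\ref{main1} implies it, and your argument — extracting $v=v_d^r(F_{\psi(p)})$, $\chi=\chi_1^{2d+2l}\chi_{r+1}^{2l}$, factoring the specialization map as $\sigma\circ\theta_{r+1}$, and invoking Lemma~\ref{refinement} to identify $\pi(J)$ with $\langle p_0,\ldots,p_{l-3}\rangle$ — is exactly the route the theorem's proof suggests. Your bookkeeping about presenting the localized ring $R$ as $k[\{z_{ij}\},t]/(t\det z-1)$ to make sense of the ``minimal generator'' clause is a genuine point the paper glosses over, and your handling of it is sound.
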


Let's consider some decision problems. Let \textbf{SysAl} be a problem asking if a system of algebraic equations over $\mathbb{Q}$ has a solution over $k$ and \textbf{HC} be a problem asking if a graph has a Hamiltonian cycle. Using the proof of Corollary 2.3.2 in \cite[p. 21]{Mar}, we can prove that \textbf{HC} can be reduced to \textbf{SysAl} in polynomial time. By Theorem 10.23 of \cite{Hopcroft}, \textbf{HC} is an NP-complete problem so that \textbf{SysAl} is an NP-hard problem. Let's describe a solvability check problem \textbf{SC} as follows:
\begin{itemize}
\item Given : A rational Hilbert point $v\in\textup{Hilb}^{P}(\mathbb{P}_{k}^{r-1})$, a choice of closed immersion $\textup{Hilb}^{P}(\mathbb{P}_{k}^{r-1})\rightarrow \mathbb{P}(\bigwedge^{Q(d)}S_{d})$ and a character $\chi\in X(T_{r})$.
\item Decide : Is there a coordinate $g\in U_{r}$ satisfying $\chi\notin\Xi_{g.v}$?
\end{itemize}
Here, $v\in \textup{Hilb}^{P}(\mathbb{P}_{k}^{r-1})$ is rational if it represents a saturated homogeneous ideal of $\textrm{}^{r}S$ generated by rational polynomials. 
Theorem~\ref{main1} shows that there is a polynomial time reduction from \textbf{SysAl} to \textbf{SC}. That is,
\begin{corollary}
The problem $\mathbf{SC}$ is NP-hard.
\end{corollary}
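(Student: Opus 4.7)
The plan is to spell out the polynomial-time many-one reduction from $\mathbf{SysAl}$ to $\mathbf{SC}$ that is implicit in Theorem~\ref{main1}; NP-hardness of $\mathbf{SC}$ then follows immediately from the NP-hardness of $\mathbf{SysAl}$ recalled above. The reduction itself is essentially a matter of reading off the construction that precedes Theorem~\ref{main1}, packaging its output as a legal $\mathbf{SC}$-instance, and checking that the encoding size is polynomially bounded.

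Given an instance of $\mathbf{SysAl}$, namely a finite system $\{q_j(y_1,\ldots,y_s)=0\}_{j=1}^{m}$ with $q_j\in\mathbb{Q}[y_1,\ldots,y_s]$, I would first rename variables $y_i\mapsto x_{i+1}$ so that $r=s+1$, pick $l=\max(3,m+2)$, pad the list by zero polynomials to obtain $p=\{p_i\}_{i=0}^{l-3}\subset\mathbb{Q}[x_2,\ldots,x_r]$, and set $d=\max_i\deg(p_i)$. I would then form $\psi(p)$ via formulas (\ref{firstterm})–(\ref{otherterms}), the sequence $F_{\psi(p)}$, and the Hilbert point $v=v_d^r(F_{\psi(p)})\in \textup{Hilb}^P(\mathbb{P}_k^{r})$ for the Hilbert polynomial $P$ displayed before Lemma~\ref{basic}. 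The character is taken to be $\chi=\chi_1^{2d+2l}\chi_{r+1}^{2l}\in X(T_{r+1})$, and the closed immersion is the standard Plücker embedding recalled in the introduction. Since all coefficients in $\psi(p)$ are rationals and $v$ is built by explicit sums and wedges of the $F_{\psi(p)}$'s, the resulting $v$ represents a rational saturated ideal in the sense of the definition of $\mathbf{SC}$, so the triple $(v,\text{immersion},\chi)$ is a valid input for $\mathbf{SC}$.

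Once the output has been constructed, Theorem~\ref{main1} supplies the equivalence: there exists $g\in U_{r+1}$ with $\chi\notin\Xi_{g.v}$ if and only if $\langle p_0,\ldots,p_{l-3}\rangle=\langle q_1,\ldots,q_m\rangle$ has a common zero in $k^{s}$. Hence the constructed $\mathbf{SC}$-instance is a yes-instance precisely when the original $\mathbf{SysAl}$-instance is, which is exactly what the reduction requires.

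The only technical point, and the main obstacle, is to verify that the construction can be carried out in time polynomial in the bit-length of the input $\{q_j\}$. Concretely one needs to bound the bit-length of the binomial coefficients $\binom{2l-1-i}{a}$ and the ratios $(2l-1)!/(2l-1-i)!$ appearing in $\psi(p)$ by a polynomial in $l$, note that $F_{\psi(p)}$ has at most $2l$ components each of which is a polynomial of degree at most $2l+d$ in $r+1$ variables, and check that the Plücker coordinates of $v$ are then integer-coefficient polynomial combinations of the coefficients of the $F_{\psi(p)}$'s, so the whole output is encoded in size polynomial in $l$, $d$, $r$ and in the input. These bounds are entirely routine; with them in place the map $\mathbf{SysAl}\to\mathbf{SC}$ described above is a polynomial-time many-one reduction, and $\mathbf{SC}$ inherits NP-hardness from $\mathbf{SysAl}$.
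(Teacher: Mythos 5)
Your proof is correct and follows essentially the same approach as the paper's, which offers only the one-line remark that ``Theorem~\ref{main1} shows that there is a polynomial time reduction from $\mathbf{SysAl}$ to $\mathbf{SC}$'' and leaves the reduction implicit. You fill in exactly the content that remark points to: renaming variables to land in $k[x_2,\ldots,x_r]$, padding to fix $l$, forming $\psi(p)$, $F_{\psi(p)}$, $v_d^r(F_{\psi(p)})$ and the character $\chi_1^{2d+2l}\chi_{r+1}^{2l}$, invoking Theorem~\ref{main1} for the equivalence, and bounding the output size. One wrinkle to flag on the polynomial-time bound: your phrase ``the Plücker coordinates of $v$ \ldots\ so the whole output is encoded in size polynomial'' is only right under a sparse encoding. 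The ambient space $\bigwedge^{2}\,{}^{r+1}S_{2l+d}$ has dimension $\binom{N}{2}$ with $N=\binom{r+2l+d}{r}$, which is not polynomial in the input size, so a dense Plücker vector would be too large. The correct reading, implicit in the paper's definition of a rational Hilbert point, is that $v$ is given by the two rational generating polynomials $\sum_i x_{r+1}^i x_1^{2l-i}(F_{\psi(p)})_i$ and $\sum_i x_{r+1}^{i+1} x_1^{2l-i-1}(F_{\psi(p)})_i$, each of which has polynomially many monomials with coefficients of polynomial bit-length; with that encoding your argument is sound. Also note the index set in the statement of Theorem~\ref{main1} (``$1\le i\le l-3$'') is a typo for $0\le i\le l-3$, as the proof of that theorem makes clear, and you correctly use $\langle p_0,\ldots,p_{l-3}\rangle$.
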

There is an extended version of \textbf{SC}, which would be called \textbf{ESC}, described as follows:
\begin{itemize}
\item Given : A rational Hilbert point $v\in\textup{Hilb}^{P}(\mathbb{P}_{k}^{r-1})$, a choice of closed immersion $\textup{Hilb}^{P}(\mathbb{P}_{k}^{r-1})\rightarrow \mathbb{P}(\bigwedge^{Q(d)}S_{d})$ and a finite set of characters $C \subset X(T_{r})$.
\item Decide : Is there a coordinate $g\in U_{r}$ satisfying $C \cap \Xi_{g.v}=\emptyset$?
\end{itemize} 
\textbf{SC} can be reduced to \textbf{ESC} in polynomial time so that we can prove follows:
\begin{corollary}
The problem $\mathbf{ESC}$ is NP-hard.
\end{corollary}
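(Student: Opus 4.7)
The plan is to exhibit a polynomial-time reduction from \textbf{SC} to \textbf{ESC} and then conclude NP-hardness of \textbf{ESC} from the previous corollary. Since an instance of \textbf{SC} is essentially an instance of \textbf{ESC} whose character set happens to be a singleton, the reduction is immediate, and the main work is simply verifying that nothing is lost in encoding and that the yes/no answers align.

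More precisely, given an instance $(v, \iota, \chi)$ of \textbf{SC}, where $v\in\textup{Hilb}^{P}(\mathbb{P}_{k}^{r-1})$ is a rational Hilbert point, $\iota:\textup{Hilb}^{P}(\mathbb{P}_{k}^{r-1})\rightarrow \mathbb{P}(\bigwedge^{Q(d)} S_{d})$ is a choice of closed immersion, and $\chi\in X(T_{r})$ is a character, I would construct the \textbf{ESC} instance $(v, \iota, C)$ with $C=\{\chi\}$. Writing $\chi$ down explicitly as a single element of $X(T_{r})$ and packaging it as the one-element set $C$ is trivially polynomial in the size of the original input (in fact linear), so the reduction runs in polynomial time.

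The equivalence of the two decision problems on this pair of instances then reduces to the observation that, by definition,
\begin{displaymath}
\chi\notin\Xi_{g.v}\iff \{\chi\}\cap\Xi_{g.v}=\emptyset \iff C\cap \Xi_{g.v}=\emptyset,
\end{displaymath}
so a witness $g\in U_{r}$ for the \textbf{SC} instance is the same as a witness for the constructed \textbf{ESC} instance, and vice versa. Combining this polynomial-time reduction with the NP-hardness of \textbf{SC} established in the previous corollary yields NP-hardness of \textbf{ESC}.

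I do not anticipate a genuine obstacle here; the only thing one has to be mildly careful about is that the encoding of $(v,\iota)$ used for \textbf{ESC} is the same as the encoding used for \textbf{SC}, so that the output of the reduction is a syntactically legal \textbf{ESC} instance of size polynomial in the input. Since both problems are stated with the same data $(v,\iota)$ and differ only in whether the last slot is a single character or a finite set of characters, this compatibility is automatic.
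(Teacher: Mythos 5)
Your reduction from \textbf{SC} to \textbf{ESC} by taking $C=\{\chi\}$ is exactly the argument the paper has in mind; the paper states the reduction in one line without detail, and your verification that the yes/no answers coincide is the obvious content of that line. The proposal is correct and matches the paper's approach.
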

On the other hand, we can use Buchberger's algorithm in \cite{Coxbook} to solve the problem \textbf{ESC} because the zero set of an ideal $I\subset\textrm{} ^{r}S$ is non-empty if and only if $1\notin I$ if and only if the Gr\"obner basis of $I$ respect to the lexicographic(or graded reverse-lexicographic) monomial order contains $1$.

Let's construct an example. Fix natural numbers $r$ and $d$. Suppose $l=3$, $p_{0}\in k[x_{2}, \ldots, x_{r}]$ and $\deg(p_{0})\leq d$. In this case, $p$ is a sequence of length $1$ and the ideal generated by $\{p_{i}|0\leq i\leq l-3\}$ has empty zero locus if and only if $p_{0}$ is a non-zero constant polynomial. Let 
\begin{displaymath}
F'=-6x_{1}^{d+5}+15x_{1}^{d+4}x_{r+1}-10x_{1}^{d+3}x_{r+1}^{2}+x_{1}^{d}x_{r+1}^{5}+\frac{x_{1}^{d+3}x_{r+1}^{2}}{2}p_{0}\left(\frac{x_{2}}{x_{1}}, \ldots, \frac{x_{r}}{x_{1}} \right).
\end{displaymath}
By the definition, $I_{F_{\psi(p)}}=\langle x_{1}F', x_{r+1}F'\rangle$. This means that there is a $g\in U_{r+1}$ such that $\chi_{1}^{2d+6}\chi_{r+1}^{6}\notin \Xi_{[g.v_{d}^{r}(F_{\psi(p)})]}$ if and only if $p_{0}$ is the zero polynomial or $\deg(p_{0})\geq 1$.
\section{A relation between the problem ESC and GIT-semistability}
In this section, every GIT problem is related to the action of $\textup{GL}_{r}(k)$ on $E_{d, b}^{r}$. It will be proved that we can decide whether a rational Hilbert point is GIT-semistable by solving finitely many \textbf{ESC}.  As a consequence of \cite[Criterion 3.3]{Ian}, we have the following lemma.
\begin{lemma}
\label{stability}
A rational point $v\in E_{d, b}^{r}$ is GIT-semistable if and only if $\xi_{d, b}^{r}\in\Delta_{g.v}$ for all $g\in \textup{GL}_{r}(k)$.
\end{lemma}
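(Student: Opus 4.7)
The plan is to deduce Lemma~\ref{stability} from the Hilbert-Mumford numerical criterion for the $\textup{SL}_r(k)$ action on $E_{d,b}^r$, which is essentially the content of Criterion~3.3 in \cite{Ian}. First I would recall that by Hilbert-Mumford, $v$ is semistable if and only if $\mu(v, \lambda) \geq 0$ for every non-trivial one-parameter subgroup $\lambda$ of $\textup{SL}_r(k)$. Since every 1-PS of $\textup{SL}_r(k)$ is conjugate via some $g \in \textup{GL}_r(k)$ to a 1-PS of $T_r \cap \textup{SL}_r$, and since $\mu(v, g\lambda g^{-1}) = \mu(g^{-1}.v, \lambda)$, the criterion becomes: for every $g \in \textup{GL}_r(k)$ and every non-trivial 1-PS $\lambda$ of $T_r \cap \textup{SL}_r$, one has $\mu(g.v, \lambda) \geq 0$.

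Next I would translate this torus-level criterion into the language of states. For any $w \in E^r_{d,b}$ and a 1-PS $\lambda$ of $T_r$, the number $\mu(w, \lambda)$ equals $-\min\{\langle \lambda, \chi\rangle : \chi \in \Xi_w\}$ in the standard sign convention. By the separating hyperplane theorem, the inequality $\min\{\langle \lambda, \chi\rangle : \chi \in \Xi_{g.v}\} \leq 0$ holds for every cocharacter $\lambda$ of $T_r \cap \textup{SL}_r$ if and only if the origin lies in the convex hull of the projection of $\Xi_{g.v}$ onto $X(T_r \cap \textup{SL}_r)_{\mathbb{R}}$.

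Finally I would identify that projection with the translation $\chi \mapsto \chi - \xi^r_{d,b}$ restricted to $\Xi_{g.v}$. Every $\chi \in \Xi_{g.v}$ is a $T_r$-weight of the representation $\bigwedge^{b}\,^{r}S_d$, so its coordinate sum is exactly $bd$: each monomial in $^rS_d$ has weight of coordinate sum $d$, and a $b$-fold wedge multiplies this by $b$. The orthogonal projection $X(T_r)_{\mathbb{R}} \to X(T_r \cap \textup{SL}_r)_{\mathbb{R}}$ is $\chi \mapsto \chi - \frac{1}{r}(\text{coord.\ sum of }\chi)(1,\ldots,1)$, which on $\Xi_{g.v}$ is exactly translation by $-\xi^r_{d,b}$. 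Hence $0$ lies in the projected convex hull if and only if $\xi^r_{d,b}$ lies in $\Delta_{g.v}$, completing the equivalence.

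The main obstacle is not content but bookkeeping: I need to verify that the sign convention for $\mu$ in \cite{Ian} matches the one used in the separating hyperplane argument, and to check that the quantifier "for all $g \in \textup{GL}_r(k)$" in the statement of the lemma really captures every 1-PS of $\textup{SL}_r(k)$ through the conjugation step (together with the elementary observation $\Xi_{t.v} = \Xi_v$ for $t \in T_r$, which ensures that the "$g$" really needs to range over $\textup{GL}_r(k)/T_r$, but harmlessly may be taken to range over all of $\textup{GL}_r(k)$). Once these points are settled, the remainder is the elementary barycenter observation above.
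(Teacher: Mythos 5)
Your argument is correct and follows essentially the same route as the paper: reduce to the action of a single maximal torus via conjugacy and then invoke the state-polytope / barycenter criterion, which the paper cites directly as \cite[Criterion 3.3]{Ian} (after appealing to \cite[Theorem 2.1]{GIT} for the torus reduction). The only difference is that you unpack Criterion 3.3 from the Hilbert--Mumford criterion, the separating-hyperplane argument, and the coordinate-sum computation identifying the projection onto $X(T_r\cap\textup{SL}_r)_{\mathbb{R}}$ with translation by $-\xi^r_{d,b}$, whereas the paper uses that result as a black box.
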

\begin{proof}
$v$ is semi-stable if and only if it is semi-stable under the action of every maximal torus of $\textup{GL}_{r}(k)$ by \cite[Theorem 2.1]{GIT}. Since every two maximal tori are conjugate, \cite[Criterion 3.3]{Ian} proves the lemma.
\end{proof}
A point in $X(T)_{\mathbb{R}}$ is not in a polytope $\Delta$ if and only if there is a separating hyperplane in $X(T)_{\mathbb{R}}$. That is,
\begin{lemma}
\label{hplane}
For any $g\in\textup{GL}_{r}(k)$ and $v\in E_{d, b}^{r}$, $\xi_{d, b}^{r}\notin \Delta_{g.v}$ if and only if there is an $\omega \in X(T)_{\mathbb{R}}^{\vee}$ such that
\begin{equation}
\label{unstable}
\omega (\xi_{d, b}^{r})<\min \omega (\Xi_{g.v}\otimes_{\mathbb{R}} 1).
\end{equation}
\end{lemma}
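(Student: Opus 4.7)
My plan is to derive this as an instance of the standard separating hyperplane theorem for convex polytopes, with a small bookkeeping step to pass between the minimum over $\Delta_{g.v}$ and the minimum over the finite set $\Xi_{g.v}$.

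First I would observe that $\Xi_{g.v}$ is a finite subset of $X(T_r)$, so $\Delta_{g.v} = \operatorname{conv}(\Xi_{g.v}) \subset X(T_r)_{\mathbb{R}}$ is a compact convex polytope. For the reverse implication, suppose some $\omega \in X(T_r)_{\mathbb{R}}^{\vee}$ satisfies $\omega(\xi_{d,b}^r) < \min \omega(\Xi_{g.v} \otimes_{\mathbb{R}} 1)$. Any element $y \in \Delta_{g.v}$ is a convex combination $y = \sum_i t_i \chi_i$ with $\chi_i \in \Xi_{g.v}$, $t_i \geq 0$, $\sum_i t_i = 1$, so by linearity $\omega(y) = \sum_i t_i \omega(\chi_i) \geq \min \omega(\Xi_{g.v} \otimes_{\mathbb{R}} 1) > \omega(\xi_{d,b}^r)$. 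In particular $y \neq \xi_{d,b}^r$, so $\xi_{d,b}^r \notin \Delta_{g.v}$.

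For the forward implication, assume $\xi_{d,b}^r \notin \Delta_{g.v}$. Since $\Delta_{g.v}$ is a nonempty closed convex subset of the finite-dimensional real vector space $X(T_r)_{\mathbb{R}}$ and $\xi_{d,b}^r$ lies outside it, the standard separating hyperplane theorem produces a linear functional $\omega \in X(T_r)_{\mathbb{R}}^{\vee}$ and a real constant $c$ such that $\omega(\xi_{d,b}^r) < c \leq \omega(y)$ for every $y \in \Delta_{g.v}$. In particular $\omega(\xi_{d,b}^r) < c \leq \omega(\chi)$ for every $\chi \in \Xi_{g.v} \otimes_{\mathbb{R}} 1 \subset \Delta_{g.v}$, and because $\Xi_{g.v}$ is finite the minimum $\min \omega(\Xi_{g.v} \otimes_{\mathbb{R}} 1)$ is attained and is at least $c$, giving the required strict inequality \eqref{unstable}.

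The only step that requires any thought is invoking the separating hyperplane theorem in the correct strict form; everything else is a one-line verification. Since $\Delta_{g.v}$ is a polytope (hence closed) and the ambient space $X(T_r)_{\mathbb{R}}$ is finite-dimensional, strict separation of an exterior point from a closed convex set is standard, so I do not anticipate a genuine obstacle here.
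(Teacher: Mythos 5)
Your proof is correct and takes exactly the route the paper intends: the paper offers no formal proof of this lemma, merely introducing it with the sentence ``A point in $X(T)_{\mathbb{R}}$ is not in a polytope $\Delta$ if and only if there is a separating hyperplane,'' which is precisely the strict separating hyperplane theorem you invoke. Your write-up simply supplies the routine details (compactness of the polytope, passing from the minimum over $\Delta_{g.v}$ to the minimum over the finite set $\Xi_{g.v}$) that the paper leaves implicit.
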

For some special choices of $\omega\in  X(T)_{\mathbb{R}}^{\vee}$ and $v$, we can still guarantee \eqref{unstable} for every $g\in L_{r}$.
\begin{lemma}
\label{lower}
Suppose there are $v\in E_{d, b}^{r}$ and $\omega \in X(T)_{\mathbb{R}}^{\vee}$ satisfying
\begin{displaymath}
\omega (\xi_{d, b}^{r})<\min \omega (\Xi_{v}\otimes_{\mathbb{R}} 1)
\end{displaymath}
and $\omega (\chi_{i})\leq \omega (\chi_{i+1})$ for all $1\leq i < r$. Then, for any $l\in L_{r}$,
\begin{displaymath}
\omega (\xi_{d, b}^{r})<\min \omega (\Xi_{l.v}\otimes_{\mathbb{R}} 1).
\end{displaymath}
\end{lemma}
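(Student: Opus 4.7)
The plan is to exploit the fact that a lower triangular element of $\textup{GL}_r(k)$ acts on the $T_r$-weight decomposition of $A_{d,b}^r$ by operators that send each weight space into a sum of weight spaces whose weights have $\omega$-value at least as large; the monotonicity hypothesis $\omega(\chi_i)\leq \omega(\chi_{i+1})$ is exactly what makes this work. First I would factor $l=tu$ with $t\in T_r$ and $u$ lower triangular with ones on the diagonal. Since the torus acts on each weight space $V_\mu\subset A_{d,b}^r$ by the nonzero scalar $\mu(t)$, it does not change the set of weights occurring in $u.v$, so $\Xi_{l.v}=\Xi_{u.v}$ and the lemma reduces to the lower unipotent case.

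The main step is then to establish the weight-raising inclusion
\begin{displaymath}
u\cdot V_\mu \subset \bigoplus_{\mu'\,:\,\omega(\mu')\geq \omega(\mu)} V_{\mu'}.
\end{displaymath}
On $S_1$ this is immediate from $u.x_i=x_i+\sum_{j>i}u_{ji}x_j$ combined with $\omega(\chi_j)\geq \omega(\chi_i)$ for $j\geq i$. I would then extend multiplicatively to monomials in $S_d$ --- any term in the expansion of $u$ applied to $x_{i_1}\cdots x_{i_d}$ is of the form $x_{j_1}\cdots x_{j_d}$ with $j_k\geq i_k$, whose weight $\sum \chi_{j_k}$ satisfies $\omega(\sum \chi_{j_k})\geq \omega(\sum \chi_{i_k})$ --- and then to basis elements $m_1\wedge\cdots\wedge m_b$ of $A_{d,b}^r$ by expanding $u.m_1\wedge\cdots\wedge u.m_b$ factorwise.

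To conclude, I would decompose $v=\sum_{\mu\in\Xi_v}v_\mu$ into $T_r$-weight components. Then $u.v=\sum_\mu u.v_\mu$, and by the inclusion above any weight $\mu'\in\Xi_{u.v}$ has $\omega(\mu')\geq \omega(\mu)$ for some $\mu\in\Xi_v$, giving
\begin{displaymath}
\min\omega(\Xi_{l.v}\otimes_{\mathbb{R}}1)=\min\omega(\Xi_{u.v}\otimes_{\mathbb{R}}1)\geq \min\omega(\Xi_v\otimes_{\mathbb{R}}1)>\omega(\xi_{d,b}^r).
\end{displaymath}
The only genuine content is the weight-raising inclusion; everything else is routine bookkeeping. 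That inclusion is a concrete instance of the standard fact that a Borel subgroup preserves the filtration by $\omega$-height whenever $\omega$ is compatible with the associated system of positive roots, and I expect the direct monomial expansion sketched above to be the cleanest way to verify it in this setting.
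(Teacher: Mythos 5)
Your proof is correct and follows essentially the same strategy as the paper: expand the action of a lower-triangular matrix on the monomial wedge basis of $A_{d,b}^{r}$, observe that (because of $\omega(\chi_i)\leq\omega(\chi_{i+1})$) each resulting term has $\omega$-value at least that of the original weight vector, and conclude that $\min\omega(\Xi_{l.v}\otimes_{\mathbb{R}}1)\geq\min\omega(\Xi_{v}\otimes_{\mathbb{R}}1)$. The only cosmetic difference is that you factor $l=tu$ and reduce to the lower-unipotent case, whereas the paper argues directly with a general $l\in L_r$; both handle the diagonal part trivially.
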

\begin{proof}
Suppose $\eta\in \Xi_{l.v}\otimes_{\mathbb{R}} 1\setminus \Xi_{v}\otimes_{\mathbb{R}} 1$. It suffices to show that $\omega(\eta)\geq \min \omega (\Xi_{v}\otimes_{\mathbb{R}} 1)$. By definition, there is an $m\in W_{d,b}^{r}$ satisfying $\eta\in\Xi_{l.m}$ and $\Xi_{m}\subset \Xi_{v}$. By expanding $l.m$, we can prove that
\begin{displaymath}
\omega(\eta)\geq \min\omega(\Xi_{m}\otimes_{\mathbb{R}} 1)
\end{displaymath}
using the condition $\omega (\chi_{i})\leq \omega (\chi_{i+1}), \forall 1\leq i\leq r-1$. Since $\Xi_{m}\subset \Xi_{v}$, we can deduce that $\min\omega(\Xi_{m}\otimes_{\mathbb{R}} 1)\geq \min\omega(\Xi_{v}\otimes_{\mathbb{R}} 1)$. Thus the claimed statement is true.
\end{proof}
Now, we can restate the condition for $v$ to be unstable.
\begin{theorem}
\label{main2}
Suppose $v\in E_{d, b}^{r}$. $v$ is unstable if and only if there are $u\in U_{r}$ and $q\in\Sigma_{r}$ satisfying
\begin{displaymath}
\xi_{d, b}^{r}\notin \Delta_{uq.v}
\end{displaymath}
\end{theorem}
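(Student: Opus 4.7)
The plan is to handle the two implications separately. The backward implication is immediate from Lemma~\ref{stability}: if $u\in U_{r}$ and $q\in\Sigma_{r}$ satisfy $\xi_{d,b}^{r}\notin\Delta_{uq.v}$, then $uq\in\textup{GL}_{r}(k)$ violates the semistability criterion of Lemma~\ref{stability}, so $v$ is unstable.

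For the forward implication, assume $v$ is unstable. Lemma~\ref{stability} produces $g\in\textup{GL}_{r}(k)$ with $\xi_{d,b}^{r}\notin\Delta_{g.v}$, and Lemma~\ref{hplane} produces $\omega\in X(T_{r})_{\mathbb{R}}^{\vee}$ with $\omega(\xi_{d,b}^{r})<\min\omega(\Xi_{g.v}\otimes_{\mathbb{R}}1)$. My first move is to normalize $\omega$: choose $p\in\Sigma_{r}$ whose inverse arranges $(\omega(\chi_{1}),\ldots,\omega(\chi_{r}))$ in non-decreasing order, and simultaneously replace $(g,\omega)$ with $(pg,p.\omega)$. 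Using $(p.\omega)(\xi_{d,b}^{r})=\omega(\xi_{d,b}^{r})$ (from the $\Sigma_{r}$-invariance of $\xi_{d,b}^{r}$) and $(p.\omega)(\Xi_{pg.v})=\omega(\Xi_{g.v})$, the witness inequality survives; now $\omega$ satisfies the sortedness hypothesis $\omega(\chi_{i})\le\omega(\chi_{i+1})$ of Lemma~\ref{lower}.

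Next I would apply the Bruhat-type decomposition $\textup{GL}_{r}(k)=L_{r}\Sigma_{r}U_{r}$ (obtained from $\textup{GL}_{r}(k)=B_{-}\Sigma_{r}B_{+}$ using $B_{-}=L_{r}$, $B_{+}=T_{r}U_{r}$ and $\Sigma_{r}T_{r}=T_{r}\Sigma_{r}$) to write $g=lqu$ with $l\in L_{r}$, $q\in\Sigma_{r}$, $u\in U_{r}$. Then $g.v=l.(qu.v)$, and the crux is to absorb the factor $l$. The proof of Lemma~\ref{lower} in fact establishes the stronger inequality $\min\omega(\Xi_{l.w})\ge\min\omega(\Xi_{w})$ for any $w$, by showing that every character in $\Xi_{l.w}\setminus\Xi_{w}$ has $\omega$-value at least $\min\omega(\Xi_{w})$. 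Invoking this with $l$ on the vector $qu.v$ and again with $l^{-1}\in L_{r}$ on the vector $l.(qu.v)$ gives the reverse inequality, so $\min\omega(\Xi_{g.v})=\min\omega(\Xi_{qu.v})$, and hence $\omega(\xi_{d,b}^{r})<\min\omega(\Xi_{qu.v})$. Since $\Xi_{qu.v}=q.\Xi_{u.v}$, one has $\Delta_{qu.v}=q.\Delta_{u.v}$; combined with the $\Sigma_{r}$-invariance of $\xi_{d,b}^{r}$, this forces $\xi_{d,b}^{r}\notin\Delta_{u.v}$. Taking $u'=u\in U_{r}$ and $q'=e\in\Sigma_{r}$ (the identity) then gives $\xi_{d,b}^{r}\notin\Delta_{u'q'.v}=\Delta_{u.v}$, completing the forward direction.

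The hardest part is the upgrade of Lemma~\ref{lower} from a one-sided statement involving the specific vector $\xi_{d,b}^{r}$ into a two-sided $\min$-equality; this is what allows the $L_{r}$-factor of the Bruhat decomposition to be eliminated completely, and it requires one to read the proof of Lemma~\ref{lower} rather than just quoting its statement, and to apply it a second time with $l^{-1}$. The sorting reduction in the first step and the passage from $qu.v$ to $u.v$ via the $\Sigma_{r}$-action on polytopes at the end are routine once the convention for $\Sigma_{r}$ acting on $X(T_{r})$ and its dual is fixed.
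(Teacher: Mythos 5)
Your proof is correct and follows the paper's strategy almost step for step: the backward direction from Lemma~\ref{stability}, and the forward direction by sorting $\omega$ with a permutation $p$, decomposing the witness $g$ into triangular and permutation factors, and using Lemma~\ref{lower} to strip off the lower-triangular factor. The one genuine difference is the placement of the permutation in the decomposition: the paper writes $p^{-1}g=luq$ (permutation on the right of $u$, so the conclusion is $\xi_{d,b}^{r}\notin\Delta_{uq.v}$ with $q$ possibly nontrivial), whereas your Bruhat-form $g'=lqu$ puts the permutation adjacent to the lower-triangular factor, and since $\Xi_{qu.v}=q.\Xi_{u.v}$ and $\xi_{d,b}^{r}$ is $\Sigma_{r}$-fixed, you can absorb $q$ entirely and conclude with $q'=e$ --- a slightly stronger statement than the theorem asks for. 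One simplification you could make: the ``two-sided $\min$-equality'' you extract from the proof of Lemma~\ref{lower} is not needed; the lemma as stated, applied once to the vector $l.(qu.v)=g'.v$ (which satisfies the hypothesis) with the element $l^{-1}\in L_{r}$, already yields $\omega(\xi_{d,b}^{r})<\min\omega(\Xi_{qu.v}\otimes_{\mathbb{R}}1)$, which is exactly how the paper argues.
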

\begin{proof}
If part is obvious by Lemma~\ref{stability}. Suppose there is $g\in\textup{GL}_{r}(k)$ satisfying
\begin{displaymath}
\xi_{d, b}^{r}\notin \Delta_{g.v}.
\end{displaymath}
By Lemma~\ref{hplane}, there is $\omega\in X(T)_{\mathbb{R}}^{\vee}$ satisfying
\begin{displaymath}
\omega (\xi_{d, b}^{r})<\min\omega (\Xi_{g.v}\otimes_{\mathbb{R}} 1).
\end{displaymath}
There is a $p\in \Sigma_{r}$ satisfying $\omega (\chi_{p(i)})\leq \omega (\chi_{p(i+1)})$ for all $i$. Let's define $\omega_{p}(\chi_{i})=\omega(\chi_{p(i)})$. Then, 
\begin{displaymath}
\omega_{p} (\xi_{d, b}^{r})=\omega (\xi_{d, b}^{r})<\min\omega (\Xi_{g.v}\otimes_{\mathbb{R}} 1)=\min\omega_{p} (\Xi_{p^{-1}g.v}\otimes_{\mathbb{R}} 1).
\end{displaymath}
Now there are $l\in L_{r}$,$u\in U_{r}$ and $q\in \Sigma_{r}$ satisfying $p^{-1}g=luq$ by the LU-decomposition of general non-singular matrix. $p^{-1}g.v$ and $\omega_{p}$ satisfies the condition of Lemma~\ref{lower}. Thus,
\begin{displaymath}
\omega_{p} (\xi_{d, b}^{r})<\min\omega_{p} (\Xi_{l^{-1}luq.v}\otimes_{\mathbb{R}} 1)=\min\omega_{p} (\Xi_{uq.v}\otimes_{\mathbb{R}} 1).
\end{displaymath}
By Lemma~\ref{hplane}, $\xi_{d, b}^{r}\notin \Delta_{uq.v}$, as desired.
\end{proof}
Using Theorem~\ref{main2} and Lemma~\ref{hplane}, we can solve \textbf{ESC} for each choice of $\omega\in X(T)_{\mathbb{R}}^{\vee}$ and $q\in\Sigma_{r}$ to check if
\begin{displaymath}
\{\chi\in X(T)|\omega(\chi)\leq\omega(\xi_{d, Q(d)}^{r})\}\cap \Xi_{uq.v}=\emptyset
\end{displaymath}
for a rational $v\in \textup{Hilb}^{P}(\mathbb{P}_{k}^{r-1})$ and an integer $d\geq g_{P}$. Note that we have to consider finitely many $\omega$'s because $A_{d, b}^{r}$ has only finitely many weights with respect to the action of $T_{r}$. In this way, we can check if $v$ is semistable or not. This fact implies that there is an algorithm deciding if a rational $v\in\textup{Hilb}^{P}(\mathbb{P}_{k}^{r-1})$ is GIT-semistable or not.
\bibliographystyle{plain}
\bibliography{Turing}
\end{document}